\documentclass[reqno, 11pt]{amsart}
\usepackage{amsmath}
\usepackage{dsfont}
\usepackage{amsfonts}
\usepackage{amssymb}
\usepackage{aliascnt}
\usepackage{commath}
\usepackage{mathrsfs}
\usepackage{enumerate}
\usepackage{comment}
\usepackage[bookmarks=false,pdfstartview=FitH, pdfborder={0 0 0}, colorlinks=true,citecolor=blue, linkcolor=blue]{hyperref}
\usepackage{aliascnt} 

\theoremstyle{plain}
\newtheorem{theorem}{Theorem}[section]

\newaliascnt{conjecture}{theorem}
\newtheorem{conjecture}[conjecture]{Conjecture}
\aliascntresetthe{conjecture}

\newaliascnt{corollary}{theorem}
\newtheorem{corollary}[corollary]{Corollary}
\aliascntresetthe{corollary}

\newaliascnt{lemma}{theorem}
\newtheorem{lemma}[lemma]{Lemma}
\aliascntresetthe{lemma}

\newaliascnt{fact}{theorem}

\aliascntresetthe{fact}

\newaliascnt{claim}{theorem}

\aliascntresetthe{claim}

\newaliascnt{proposition}{theorem}

\aliascntresetthe{proposition}

\theoremstyle{definition}

\newaliascnt{definition}{theorem}

\aliascntresetthe{definition}

\newaliascnt{example}{theorem}

\aliascntresetthe{example}

\theoremstyle{remark}

\newaliascnt{remark}{theorem}

\aliascntresetthe{remark}

\numberwithin{equation}{section}

\begin{document}
\title{Unbounded mean convex domains in Euclidean space}
\author[J.~Ge]{Jian Ge}
\address[Ge]{School of Mathematical Sciences, Laboratory of Mathematics and Complex Systems, Beijing Normal University, Beijing 100875, P. R. China.}
\email{jge@bnu.edu.cn}
\thanks{NSFC 12371049 and the Fundamental Research Funds for the Central Universities.}
\subjclass[2000]{Primary: 53C23; Secondary: 51K10}
\keywords{mean convex}
\begin{abstract}
In this note, we prove that the infimum of the mean curvature on any disconnected boundary component of an unbounded mean convex domain in $\mathbb{R}^n$ must be zero.
\end{abstract}
\maketitle
\section{Introduction}
The splitting theorem of Cheeger-Gromoll plays an essential role in the study of the open Riemannian manifold with nonnegative Ricci curvature. For manifold with boundary, \cite{Kas1983} proved that if $M$ is complete Riemannian manifold with nonnegative Ricci curvature and weakly mean-convex boundary, and if $\partial M$ is disconnected with at least one boundary component, then $M$ splits isometrically as a product:
\begin{equation*}
	\Sigma^{n-1}\times [0, \ell].
\end{equation*}
In particular the boundary components are totally geodesic and parallel. See also \cite{CK1992} for a warped product splitting. The compactness of one boundary component is used in an essential way, it traps the shortest geodesic connecting two connected components of the boundary. If all boundary components are non-compact, it is more complicated even for flat manifolds. 

Classical minimal surface theory in $\mathbb{R}^{3}$ has a strong geometric property: The only mean-convex domains in $\mathbb{R}^{3}$ with disconnected boundary are slabs between parallel planes. In particular the boundary has mean curvature $\equiv  0$, cf. \cite{CS1980} and \cite{HM1990}. For higher dimensions, generalized catenoids give non-flat minimal hypersurfaces trapped in a slab, you can stack many catenoids one by one, the domain that these surfaces bounds is mean convex with many boundary components. Therefore one cannot expect a product structure. In fact, Gromov made the following conjecture:
\begin{conjecture}[Section 3, Conjecture 1 in \cite{Gro2019}]
	Let $X$ be an infinite mean convex domain $\subset \mathbb{R}^{n}$. If the boundary $\partial X$ is disconnected, then non of the connected component of $\partial X$ may have its mean curvature separated away from $0$, i.e. infimum of the mean curvatures of all components are zero.
\end{conjecture}

In this note, we give a proof of Gromov's conjecture.
\begin{theorem}\label{thm:Main}
	Let $X\subset \mathbb{R}^n$ be a connected unbounded domain with $C^2$ smooth boundary. Let $\nu$ be the outward unit normal of $X$. Assume $X$ is mean convex on $\partial X$. If $\partial X$ is disconnected, then for every connected component $\Sigma\subset \partial X$,
	\[
		\inf_{\Sigma} H=0.
	\]
	In particular, no connected component of $\partial X$ has mean curvature separated away from $0$.
\end{theorem}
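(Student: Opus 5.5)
Argue by contradiction: suppose some component $\Sigma$ of $\partial X$ has $H\ge h_0>0$ on $\Sigma$, with $H$ the mean curvature with respect to the outward normal $\nu$ and normalized so that $H$ is the average of the principal curvatures. The strategy is to exploit a second component $\Sigma'$ of $\partial X$, which is present since $\partial X$ is disconnected. I would look for an area-minimizing (or $\mu$-bubble type) hypersurface separating $\Sigma$ from $\Sigma'$ inside $\overline X$, and use $\Sigma$ as a strict barrier. The quantitative tool is the standard distance comparison. Let $d(x)=\operatorname{dist}(x,\Sigma)$ for $x\in X$. On its smooth part, mean convexity of $\Sigma$ gives the Riccati comparison
\[
\Delta d \le -\frac{(n-1)h_0}{1-h_0 d},
\]
with the appropriate sign. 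In the viscosity or barrier sense this holds everywhere on $X$. Consequently $d< 1/h_0$ on all of $X$, because the level sets of $d$ must focus before distance $1/h_0$. Concretely, following the normal geodesic from a point of $\Sigma$ into $X$, a focal point appears within distance $1/h_0$.

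The first step is therefore to show that $X$ lies within the tubular neighborhood $\{d<1/h_0\}$ of $\Sigma$. The claim is that every point of $X$ is within $1/h_0$ of $\Sigma$. Take $x\in X$ and a nearest point $p\in\Sigma$, so that the segment from $p$ to $x$ is perpendicular to $\Sigma$ and lies in $X$. A sphere of radius $d(x)$ centered at $x$ touches $\Sigma$ from inside $X$ at $p$. At $p$ the mean curvature of $\Sigma$ is at most that of the sphere, which gives $h_0\le 1/d(x)$. In particular $\Sigma'$ lies within distance $1/h_0$ of $\Sigma$.

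The second step is to minimize $\Phi(E)=\mathcal H^{n-1}(\partial^* E\cap X)-h_0\,|E|$, or a localized variant, over sets $E\subset X$ containing a collar of $\Sigma'$. Such a minimizer has mean curvature $h_0$ on its free part. The prescribed value $h_0$ is chosen so that $\Sigma$ acts as a barrier: the maximum principle keeps the minimizer from touching $\Sigma$, and $\Sigma'$, being mean convex, is also a barrier. The resulting hypersurface $S$ separates $\Sigma$ from $\Sigma'$ and lies in the slab-like region. It has constant mean curvature $h_0$ with respect to the normal pointing toward $\Sigma$, and it is stable.

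The main obstacle is the unboundedness of $X$. Minimizers could escape to infinity, and $\Phi$ could be unbounded below, since volume can be infinite. I would handle this by working in $X\cap B_R$ and comparing against the boundary $\Sigma'$ itself. Because $X$ lies within distance $1/h_0$ of $\Sigma$, the relevant volumes grow at most like $\tfrac{1}{h_0}$ times area, so $\Phi$ is controlled on truncated regions. Letting $R\to\infty$ then produces a limit by curvature estimates for stable CMC hypersurfaces, valid for $n\le 7$; for general $n$ one uses the regularity of minimizers off a codimension-$7$ singular set. The contradiction comes from the limit hypersurface $S$, which is complete, separates $\Sigma$ from $\Sigma'$, and has mean curvature $h_0>0$ with respect to the normal pointing toward $\Sigma$. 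Apply the sphere-touching argument of the first step to $S$ and $\Sigma$ together: take the infimum of the distance between $S$ and $\Sigma'$, approached at infinity after translating and passing to subsequential limits of the translated configurations. Then $\Sigma'$ and the translate of $S$ touch with $\Sigma'$ on the mean convex side, which contradicts the strong maximum principle. If this last comparison fails to close, I would instead use Gromov's $\mu$-bubble with a weight $h(d)$ depending on distance to $\Sigma$, which forces the width bound $<1/h_0$ to produce a contradiction directly.
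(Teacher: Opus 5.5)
Your proposal never reaches a contradiction, and the step where it would have to is exactly the content of the theorem. Step 1, the interior-ball comparison giving $d(\cdot,\Sigma)\le 1/h_0$ on $X$, is fine. But a width bound is compatible with $H\ge h_0>0$ (the solid cylinder $B^{n-1}_r\times\mathbb R$ satisfies it), so it is not decisive.

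Step 2 does not produce $S$. As written, $\Phi(E)=\mathcal H^{n-1}(\partial^*E\cap X)-h_0|E|$ is minimized by $E=X$ (or $E=X\cap B_R$ when truncated), since $\partial^*X\cap X=\varnothing$; there is no free boundary at all. A genuine $\mu$-bubble would need the opposite sign and a weight forcing it away from both ends. It would also need existence, regularity and non-escape arguments in the unbounded setting, none of which you supply.

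More seriously, even granting $S$, the region between $S$ and $\Sigma'$ is of exactly the same type as $X$. It has one boundary piece with $H\ge h_0>0$ facing a mean convex one, so the detour gains nothing.

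The real obstruction is your last step. The distance between two noncompact hypersurfaces need not be attained. ``Translating and passing to subsequential limits'' needs uniform curvature bounds on $\Sigma'$ near infinity, so that the translates converge to a hypersurface still satisfying $H\ge 0$. The hypothesis gives only a $C^2$ boundary with no uniform control, so this compactness is unavailable. And if the distance were attained, the touching argument would already work for $\Sigma$ and $\Sigma'$ directly, with no need for $S$.

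The missing idea is a way to force a minimizing pair at finite points while keeping control of the first and second variations. The paper minimizes the proper function $F_\varepsilon(x,y)=\sqrt{d(x,y)}+\varepsilon(\phi(x)+\phi(y))$ on $\Sigma\times B$, with $\phi(p)=\sqrt{1+|p|^2}$.
\begin{itemize}
\item The first variation shows the segment is nearly normal to both surfaces.
\item The second variation along matched orthonormal frames gives $\tfrac12\bigl(H_\Sigma(x_\varepsilon)+H_B(y_\varepsilon)\bigr)\le\tilde C\varepsilon$.
\item The square root keeps the error terms $Q_i$ of order $\varepsilon^2$ even if $d(x_\varepsilon,y_\varepsilon)\to 0$.
\end{itemize}
Letting $\varepsilon\to0$, with $H_B\ge0$, contradicts $H_\Sigma\ge h_0>0$.
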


The idea of the proof is simple. We introduce a penalized distance function. The penalty forces the minimum to occur at a bounded pair of points on the two connected components of the boundary, while becoming negligible as the penalty parameter tends to zero. At such a minimizing pair, the first variation shows that the segment meets the two boundary components almost perpendicularly. The second variation of the distance along matched tangent directions then compares the two mean curvatures. Uniform mean convexity makes a definite positive contribution, while the penalty terms vanish in the limit. Hence a contradiction.

\section{Proof of the theorem}
In this section, we give the proof of the main theorem. Suppose $\Sigma$ is a connected component of $\partial X$ with
\begin{equation*}
	\inf_{\Sigma} H =c >0.
\end{equation*}
We will derive a contradiction. We set $B=\partial X\setminus \Sigma \ne \varnothing$, i.e. the rest of the boundary components. Since $\partial X$ is a $C^2$ hypersurface, hence a locally connected smooth manifold. Therefore its connected components are both open and closed in $\partial X$. It follows that both $\Sigma$ and $B$ are closed subsets of $\mathbb{R}^n$. We set
\begin{equation*}
	\phi(p)= \sqrt{1+(d(o, p))^{2}},
\end{equation*}
where $o$ is the origin of $\mathbb{R}^{n}$ and $d(p, q)$ is the Euclidean function. One can verify easily:
\begin{equation}\label{eq:GradHessian}
	|\nabla \phi|\le 1,\qquad |\operatorname{Hess}\phi|\le 1.
\end{equation}

For any $\varepsilon>0$, consider the distance function:
\begin{equation}\label{eq:TheFun}
	F_{\varepsilon}: \Sigma\times B\to \mathbb{R},
\end{equation}
defined by:
\begin{equation*}
	F_{\varepsilon}(x, y) = \sqrt{d(x, y)} + \varepsilon (\phi(x)+\phi(y)),\quad x\in \Sigma, y\in B.
\end{equation*}

\begin{lemma}\label{lem:properness}
	The function $F_{\varepsilon}$ is proper on $\Sigma\times B$.
\end{lemma}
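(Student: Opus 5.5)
Properness means that for every $M\in\mathbb{R}$ the sublevel set $\{(x,y)\in\Sigma\times B : F_{\varepsilon}(x,y)\le M\}$ is compact. Equivalently, preimages of compact sets are compact. The plan is to use the closedness of $\Sigma$ and $B$ in $\mathbb{R}^n$, established just before the lemma, together with the coercivity of the penalty $\phi$. Continuity of $F_{\varepsilon}$ is clear: $d$, $\sqrt{\cdot}$ on $[0,\infty)$ and $\phi$ are all continuous, so preimages of closed sets are closed. It therefore suffices to show that sublevel sets are bounded and closed in $\mathbb{R}^n\times\mathbb{R}^n$. Together with $F_\varepsilon\ge 0$, this gives properness, since any compact $K\subset\mathbb{R}$ lies in some $[0,M]$ and $F_\varepsilon^{-1}(K)$ is a closed subset of a compact sublevel set.

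For boundedness, note that $\sqrt{d(x,y)}\ge 0$, so on the sublevel set $\{F_\varepsilon\le M\}$ we get
\[
\varepsilon\bigl(\phi(x)+\phi(y)\bigr)\le M.
\]
Since $\phi(p)\ge d(o,p)$, this forces $d(o,x)\le M/\varepsilon$ and $d(o,y)\le M/\varepsilon$. Hence the sublevel set lies in $\bar B_{M/\varepsilon}(o)\times \bar B_{M/\varepsilon}(o)$, which is bounded in $\mathbb{R}^{2n}$.

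For closedness, $\Sigma\times B$ is closed in $\mathbb{R}^{2n}$ because each factor is closed in $\mathbb{R}^n$. The sublevel set is the intersection of $\Sigma\times B$ with the closed set $\{(x,y)\in\mathbb{R}^{2n}: \sqrt{|x-y|}+\varepsilon(\phi(x)+\phi(y))\le M\}$, since the formula extends continuously to all of $\mathbb{R}^{2n}$. So the sublevel set is closed and bounded in $\mathbb{R}^{2n}$, hence compact by Heine--Borel. Its topology as a subspace of $\Sigma\times B$ agrees with its topology as a subspace of $\mathbb{R}^{2n}$, so it is compact in $\Sigma\times B$.

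The only point needing care is the closedness of $\Sigma$ and $B$ in $\mathbb{R}^n$, not just in $\partial X$. This follows from the paper's observation that components of $\partial X$ are open and closed in $\partial X$, combined with $\partial X$ being closed in $\mathbb{R}^n$ as a topological boundary. With that, the argument is routine. The lemma does not use the $\sqrt{d}$ term at all; it only uses that this term is bounded below.
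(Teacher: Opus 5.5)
Your proof is correct and follows the same route as the paper: on a sublevel set the penalty term bounds $d(o,x)$ and $d(o,y)$, and closedness of $\Sigma$ and $B$ in $\mathbb{R}^n$ then gives compactness. You also supply the routine details the paper leaves implicit, namely continuity, Heine--Borel, and the reduction from preimages of compact sets to sublevel sets.
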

\begin{proof}
	For any fixed $L>0$, if $F_{\varepsilon}(x,y)\le L$, then
	\begin{equation*}
		\varepsilon \phi(x) \le L, \quad  \varepsilon \phi(y)\le L.
	\end{equation*}
	So $x, y$ lie in a compact ball, since $\Sigma$ and $B$ are both closed, the sub-level set is compact.
\end{proof}

It follows that for any fixed $\varepsilon>0$, $F_{\varepsilon}$ achieves its minimum at some pair of boundary points $(x_{\varepsilon}, y_{\varepsilon})$, where $x_{\varepsilon}\in \Sigma$, $y_{\varepsilon}\in B$. We set
\begin{equation}\label{eq:def}
	R_{\varepsilon}:= d(x_{\varepsilon}, y_{\varepsilon})>0,
\end{equation}
and the unit tangent vector from $y_{\varepsilon}$ to $x_{\varepsilon}$ is denoted by:
\begin{equation*}
	\uparrow^{x_{\varepsilon}}_{y_{\varepsilon}}.
\end{equation*}
We fix base points $x_{0}\in \Sigma$ and $y_{0}\in B$. Since:
\begin{equation}\label{eq:F_UpperBound}
	\sqrt{R_{\varepsilon}} \le F_{\varepsilon}(x_{\varepsilon}, y_{\varepsilon}) \le F_{\varepsilon}(x_{0}, y_{0}).
\end{equation}
we know $R_{\varepsilon}$ has a uniform upper bound, denoted by $R_{0}$, which is independent of $\varepsilon$.

\begin{lemma}\label{lem:NonTouchness}
	For all sufficiently small $\varepsilon$, the interior of the segment from $x_{\varepsilon}$ to $y_{\varepsilon}$ do not touch $\partial X$.
\end{lemma}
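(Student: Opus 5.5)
The plan is a direct comparison argument using only the minimality of $(x_{\varepsilon},y_{\varepsilon})$, the $1$-Lipschitz bound $|\nabla\phi|\le 1$ from \eqref{eq:GradHessian}, and the uniform bound $R_{\varepsilon}\le R_0$ from \eqref{eq:F_UpperBound}. The square root in $F_{\varepsilon}$ is what makes it work. Its concavity means that shortening the distance by $t$ lowers $\sqrt{d}$ by at least $t/(2\sqrt{R_0})$. Moving an endpoint by $t$ raises the penalty by at most $\varepsilon t$. So for $\varepsilon$ small, any shortcut through the boundary strictly decreases $F_{\varepsilon}$.

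First I would fix the constants. For $\varepsilon\le 1$ we have $F_{\varepsilon}(x_0,y_0)\le \sqrt{d(x_0,y_0)}+\phi(x_0)+\phi(y_0)$. By \eqref{eq:F_UpperBound}, $R_{\varepsilon}\le R_0$ with $R_0$ independent of $\varepsilon\in(0,1]$. Also $R_{\varepsilon}>0$, since $\Sigma\cap B=\varnothing$. I will show the claim for every $\varepsilon<\min\{1,\,1/(2\sqrt{R_0})\}$.

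Suppose some point $z$ in the open segment $(x_{\varepsilon},y_{\varepsilon})$ lies in $\partial X=\Sigma\cup B$. This covers transversal crossings as well as tangential touching. Write $s=d(x_{\varepsilon},z)$ and $t=d(z,y_{\varepsilon})$, so that $s,t>0$ and $s+t=R_{\varepsilon}$. There are two cases.

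If $z\in B$, compare with the admissible pair $(x_{\varepsilon},z)\in\Sigma\times B$. Concavity of the square root gives
\[
\sqrt{R_{\varepsilon}-t}-\sqrt{R_{\varepsilon}}\le -\frac{t}{2\sqrt{R_{\varepsilon}}}\le -\frac{t}{2\sqrt{R_0}}.
\]
Since $\phi$ is $1$-Lipschitz, $\varepsilon(\phi(z)-\phi(y_{\varepsilon}))\le \varepsilon t$. Therefore
\[
F_{\varepsilon}(x_{\varepsilon},z)-F_{\varepsilon}(x_{\varepsilon},y_{\varepsilon})\le t\Bigl(\varepsilon-\frac{1}{2\sqrt{R_0}}\Bigr)<0,
\]
which contradicts minimality.

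If $z\in\Sigma$, the argument is symmetric. Use the pair $(z,y_{\varepsilon})$ with $s$ in place of $t$.

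There is no serious obstacle here. The only points needing care are that the bound $R_0$ is uniform in $\varepsilon$ (which requires restricting to $\varepsilon\le 1$), and that the decrease is strict, which holds because $t>0$ (respectively $s>0$) for an interior point of the segment.
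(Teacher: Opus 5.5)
Your proof is correct and follows the paper's argument: replace the endpoint lying in the same component as the touching point by that point, use concavity of the square root to gain at least $t/(2\sqrt{R_0})$, and use the $1$-Lipschitz bound on $\phi$ to lose at most $\varepsilon t$. Your explicit restriction to $\varepsilon\le 1$, which makes $R_0$ uniform, spells out a point the paper leaves implicit.
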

\begin{proof}
	Suppose the segment touches $\partial X$ at some point, say $p\in \partial X$. If $p \in \Sigma$, let $t = d(x_{\varepsilon}, p)$, then $d(p, y_{\varepsilon}) = R_{\varepsilon}-t$. Since $\phi$ is $1$-Lipschitz, we have:
	\begin{equation*}
		\phi(p)\le \phi(x_{\varepsilon}) + t.
	\end{equation*}
	Therefore
	\begin{equation*}
		F_{\varepsilon}(p, y_{\varepsilon})\le \sqrt{R_{\varepsilon}-t} + \varepsilon (\phi(x_{\varepsilon})+ t + \phi(y_{\varepsilon})).
	\end{equation*}
	Then we have:
	\begin{equation*}
		F_{\varepsilon}(p, y_{\varepsilon})-F_{\varepsilon}(x_{\varepsilon},  y_{\varepsilon}) \le \sqrt{R_{\varepsilon}-t} - \sqrt{R_{\varepsilon}} + \varepsilon t\le \varepsilon t -\frac{t}{2\sqrt{R_{0}}}.
	\end{equation*}
	Hence, if $\varepsilon<\frac{1}{2\sqrt{R_{0}}}$, then $F_{\varepsilon}(p, y_{\varepsilon}) <F_{\varepsilon}(x_{\varepsilon}, y_{\varepsilon})$, contradicting the minimality of $(x_{\varepsilon}, y_{\varepsilon})$. The case of $p\in B$ can be proved similarly.
\end{proof}

\begin{corollary}\label{cor:InwardLooking}
	The segment from $x_{\varepsilon}$ to $y_{\varepsilon}$ lies entirely inside $X$, with both ends pointing inward direction:
	\begin{equation*}
		\left\langle \uparrow_{x_{\varepsilon}}^{y_{\varepsilon}}, \nu_{x_{\varepsilon}} \right\rangle \le 0, \quad 
		\left\langle \uparrow_{y_{\varepsilon}}^{x_{\varepsilon}}, \nu_{y_{\varepsilon}} \right\rangle \le 0. \quad 
	\end{equation*}
\end{corollary}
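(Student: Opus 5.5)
The plan has three steps: show the open segment lies in one complementary region of $\partial X$, show that region is $X$, and then read off the sign conditions from the endpoints.

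\textbf{Step 1: the open segment lies in one region.} Take $\varepsilon$ small as in Lemma~\ref{lem:NonTouchness}, and let $\gamma(t)=x_{\varepsilon}+t(y_{\varepsilon}-x_{\varepsilon})$ for $t\in[0,1]$. By Lemma~\ref{lem:NonTouchness}, $\gamma((0,1))\subset\mathbb{R}^n\setminus\partial X$. Since $\gamma((0,1))$ is connected, it lies either entirely in $X$ or entirely in $\mathbb{R}^n\setminus\overline{X}$.

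\textbf{Step 2: the region is $X$.} Near $x_{\varepsilon}$, the $C^2$ hypersurface $\partial X$ is a graph over $T_{x_{\varepsilon}}\Sigma$, with $X$ on the side opposite to $\nu_{x_{\varepsilon}}$. I expect this to be the main obstacle, because it is not a formal consequence of Lemma~\ref{lem:NonTouchness}. Any segment from $\Sigma$ to $B$ whose interior avoids $\partial X$ stays in a single complementary region, but that region could a priori be a component of $\mathbb{R}^n\setminus\overline{X}$. This can only happen if that component has both $\Sigma$ and a piece of $B$ in its boundary. Since $X$ is connected and each component of $\partial X$ is a closed connected hypersurface separating $\mathbb{R}^n$, a complementary component adjacent to $\Sigma$ is bounded only by $\Sigma$. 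Indeed, $\Sigma$ separates $\mathbb{R}^n$ into two parts, the connected set $X$ lies in one of them, and the other part contains no other boundary component. So the exterior side of $\Sigma$ meets no point of $B$, and $\gamma((0,1))$ cannot lie there. Hence $\gamma((0,1))\subset X$. For the separation claim I would invoke the Jordan–Brouwer separation theorem for properly embedded closed hypersurfaces, which holds via $\mathbb{Z}_2$ intersection numbers since $H^1(\mathbb{R}^n;\mathbb{Z}_2)=0$. The closed segment then lies in $\overline{X}$.

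\textbf{Step 3: the sign conditions.} Write $\partial X$ locally near $x_{\varepsilon}$ as $\{f=0\}$ with $X=\{f<0\}$ and $\nabla f(x_{\varepsilon})=|\nabla f|\,\nu_{x_{\varepsilon}}$. For small $t>0$ we have $f(\gamma(t))<0$ and $f(\gamma(0))=0$. Therefore
\[
\frac{d}{dt}\Big|_{t=0^+} f(\gamma(t)) = R_{\varepsilon}\,|\nabla f(x_{\varepsilon})|\,\left\langle \uparrow_{x_{\varepsilon}}^{y_{\varepsilon}},\nu_{x_{\varepsilon}}\right\rangle\le 0,
\]
which gives the first inequality. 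The second follows symmetrically at $y_{\varepsilon}$ by reversing the segment.
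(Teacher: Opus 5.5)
Your proof is correct and follows essentially the same route as the paper. Both arguments use Jordan--Brouwer separation for $\Sigma$ and observe that $X$, hence $B$, lies on one side of $\Sigma$. So the segment, whose interior avoids $\partial X$ by Lemma~\ref{lem:NonTouchness}, cannot sit on the exterior side, and the sign conditions are then read off at the endpoints. You are more explicit than the paper in showing that the whole open segment lies in $X$ (using that the two local sides of $\Sigma$ at $x_{\varepsilon}$ fall into different components of $\mathbb{R}^n\setminus\Sigma$, part of the $\mathbb{Z}_2$-intersection proof you cite), and that is what lets the inequality at $y_{\varepsilon}$ follow by symmetry.
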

\begin{proof}
	Since $\Sigma$ is a connected, closed, properly embedded, two-sided hypersurface in $\mathbb{R}^n$, it separates $\mathbb{R}^n$ into two components. Let $\Omega_\Sigma$ be the side containing $X$. Since $B\subset \overline{X}$ and $B\cap \Sigma =\varnothing$, we have $B\subset \Omega_\Sigma$. By lemma \ref{lem:NonTouchness}, the open segment from $x_{\varepsilon}$ to $y_{\varepsilon}$ avoids $\partial X$. Since $y_{\varepsilon}\in \Omega_\Sigma$, the segment lies in $\Omega_\Sigma$ except $x_{\varepsilon}$. Near $x_{\varepsilon}$, the segment is pointing inward side of $\Sigma$, this finishes the proof.
\end{proof}

We calculate the first variation of $F_{\varepsilon}$ at $(x_{\varepsilon}, y_{\varepsilon})$, for simplicity, we drop the subscript $\varepsilon$ temporarily:
\begin{equation*}
	R:=R_{\varepsilon},\quad x:=x_{\varepsilon}, y:=y_{\varepsilon}.
\end{equation*}
For every $v\in T_{x}\Sigma$, direct calculation shows:
\begin{equation*}
	0=dF_{\varepsilon}(v, 0)= -\frac{1}{2\sqrt{R}} \left\langle \uparrow_{x}^{y}, v \right\rangle + \varepsilon \left\langle \nabla \phi(x), v \right\rangle. 
\end{equation*}
Recall that $\phi$ is $1$-Lipschitz, we have:
\begin{equation*}
	\Big|\Big\langle \uparrow_{x}^{y}, v \Big\rangle\Big| \le 2 \varepsilon \sqrt{R} |v|.
\end{equation*}
Similarly, for any $w\in T_{y}B$, we have:
\begin{equation*}
	\Big|\left\langle \uparrow_{y}^{x}, w \right\rangle\Big| \le 2 \varepsilon \sqrt{R} |w|.
\end{equation*}
Therefore, we just proved:
\begin{lemma}\label{lem:NormalConvergence}
	The tangential projections are small:
	\[
		\left|(\uparrow_x^y)^{T_x\Sigma}\right|\le 2\varepsilon\sqrt R,\qquad \left|(\uparrow_y^x)^{T_yB}\right| \le 2\varepsilon\sqrt R.      
	\]
	Since $R\le R_0$, we know:
	\begin{equation}
		a:=\langle \uparrow_{x}^{y}, -\nu_{x} \rangle \to 1, \quad  
		b:=\langle \uparrow_{y}^{x}, -\nu_{y} \rangle \to 1,
	\end{equation}
	 as $\varepsilon\to 0$.
\end{lemma}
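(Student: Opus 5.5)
The lemma follows from the two first-variation estimates just derived, together with the sign information in Corollary \ref{cor:InwardLooking}. The tangential bound is a restatement. For any $v\in T_x\Sigma$ we have $|\langle \uparrow_x^y, v\rangle|\le 2\varepsilon\sqrt R\,|v|$. Choosing $v$ to be the unit vector in the direction of the tangential projection $(\uparrow_x^y)^{T_x\Sigma}$ (if it is nonzero) gives
\[
\left|(\uparrow_x^y)^{T_x\Sigma}\right| = \left\langle \uparrow_x^y, v\right\rangle \le 2\varepsilon\sqrt R .
\]
The same argument at $y$ with $w\in T_yB$ gives the second inequality.

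For the normal components, I decompose the unit vector $\uparrow_x^y$ orthogonally as
\[
\uparrow_x^y = (\uparrow_x^y)^{T_x\Sigma} + \langle \uparrow_x^y,\nu_x\rangle \nu_x .
\]
This gives
\[
a^2 = \langle \uparrow_x^y,\nu_x\rangle^2 = 1-\left|(\uparrow_x^y)^{T_x\Sigma}\right|^2 \ge 1-4\varepsilon^2 R \ge 1-4\varepsilon^2 R_0 ,
\]
where $R\le R_0$ is the uniform bound from \eqref{eq:F_UpperBound}. The only point needing care is the sign of $a$, and this is where Corollary \ref{cor:InwardLooking} enters. It gives $\langle \uparrow_x^y,\nu_x\rangle\le 0$, so $a\ge 0$, and hence $a\ge \sqrt{1-4\varepsilon^2R_0}$. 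Since also $a\le 1$, we get $a\to1$ as $\varepsilon\to0$. The corollary applies because we take $\varepsilon$ small enough for Lemma \ref{lem:NonTouchness} to hold.

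The same argument at $y$, using $\langle \uparrow_y^x,\nu_y\rangle\le 0$, gives $1\ge b\ge\sqrt{1-4\varepsilon^2R_0}$, so $b\to 1$.

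There is no real obstacle. The one thing to check is that the convergence is uniform: it depends only on $R_0$, which is independent of $\varepsilon$. This holds because the minimizing pair $(x_\varepsilon,y_\varepsilon)$ varies with $\varepsilon$, but the bound $R_\varepsilon\le R_0$ does not.
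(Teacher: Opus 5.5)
Your proposal is correct and follows the paper's argument. The first-variation identity at the minimizing pair bounds the tangential component of $\uparrow_x^y$ (resp.\ $\uparrow_y^x$) by $2\varepsilon\sqrt R$, and the orthogonal decomposition then gives $a^2=1-|(\uparrow_x^y)^{T_x\Sigma}|^2\ge 1-4\varepsilon^2R_0$, and likewise for $b$. Your one addition is making explicit that the sign $a,b\ge 0$ comes from Corollary~\ref{cor:InwardLooking}, valid once $\varepsilon$ is small enough for Lemma~\ref{lem:NonTouchness}; the paper uses this only tacitly.
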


By lemma \ref{lem:NormalConvergence}, $T_{x}\Sigma$ and $T_{y}B$ are $C \varepsilon \sqrt{R}$-close to the hyperplane perpendicular to $\uparrow_{x}^{y}$. Hence they are $C \varepsilon \sqrt{R}$-close to each other. This allow us to choose orthonormal basis $\left\{ v_{1}, \cdots, v_{n-1} \right\}$ for $T_{x}\Sigma$ and orthonormal basis $\left\{ w_{1}, \cdots, w_{n-1} \right\}$ for $T_{y}B$, with
\begin{equation}\label{eq:KEY}
	|v_{i}-w_{i}| \le C \varepsilon\sqrt{R}\to 0.
\end{equation}

Now, we calculate the second variation. Set $V_{i}:=(v_{i}, w_{i})\in T_{(x, y)}\Sigma\times B$. Since $F_{\varepsilon}$ has a minimum at the point $(x, y)$, we have:
\begin{equation*}
	\operatorname{Hess} F_{\varepsilon}(V_{i}, V_{i}) \ge 0, \quad \text{for every}\ i.
\end{equation*}

Let $r = d(p, q): \Sigma\times B \to \mathbb{R}$ be the Euclidean distance function.
\begin{lemma}\label{lem:Hessofr}
	Let notations be as above, we have:
	\[
		\operatorname{Hess}_{\Sigma\times\Gamma}r(V_i,V_i) = Q_{i} - a h_\Sigma(v_i, v_i)-b h_{B}(w_i, w_i).
	\]
	where
	\begin{equation}\label{eq:KEY01}
		0\le Q_{i}:= \frac{|v_{i}-w_{i}|^{2} - \left\langle v_{i}-w_{i}, \uparrow_{x}^{y} \right\rangle ^{2}}{R}\le C \varepsilon^{2},
	\end{equation}
	for some constant $C>0$ independend of $\varepsilon$.
\end{lemma}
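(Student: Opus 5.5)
We compute the Hessian by taking curves in $\Sigma$ and $B$ and differentiating the Euclidean distance twice. Choose geodesics $\gamma(s)$ of $\Sigma$ with $\gamma(0)=x$, $\gamma'(0)=v_i$, and $\sigma(s)$ of $B$ with $\sigma(0)=y$, $\sigma'(0)=w_i$. Then $\operatorname{Hess}r(V_i,V_i)=\frac{d^2}{ds^2}\big|_{s=0} |\gamma(s)-\sigma(s)|$. Since these are geodesics of the submanifolds, their ambient accelerations are normal: $\gamma''(0)=-h_\Sigma(v_i,v_i)\nu_x$ and $\sigma''(0)=-h_B(w_i,w_i)\nu_y$. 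Here we use the sign convention in which $h$ is computed with respect to the outward normal $\nu$, so that mean convexity means $H\ge 0$ for $h$ defined by $\langle \nabla_v\nu, w\rangle$.

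Write $u(s)=\gamma(s)-\sigma(s)$, so that $u(0)=x-y=R\,\uparrow_y^x$. For $r=|u|$ we have the standard formula
\[
r''=\frac{|u'|^2-\langle u',u/|u|\rangle^2}{|u|}+\Big\langle u'',\frac{u}{|u|}\Big\rangle .
\]
At $s=0$ we have $u'=v_i-w_i$, and $u/|u|=\uparrow_y^x=-\uparrow_x^y$. The first term is therefore exactly $Q_i$, and it is $\ge0$ by Cauchy--Schwarz. The second term is
\[
\langle -h_\Sigma\nu_x + h_B\nu_y,\ \uparrow_y^x\rangle
= -h_\Sigma(v_i,v_i)\langle \nu_x,-\uparrow_x^y\rangle\cdot(-1)\cdots
\]
and it must be sorted out carefully. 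We have $\langle\nu_x,\uparrow_y^x\rangle=\langle \nu_x,-\uparrow_x^y\rangle=a$ and $\langle \nu_y,\uparrow_y^x\rangle=-b$. Hence the second term equals $-a\,h_\Sigma(v_i,v_i)-b\,h_B(w_i,w_i)$, as claimed.

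\textbf{The main obstacle} is matching the signs of $h$ with the convention implicit in the statement. I would fix $h(v,v)=\langle \nabla_v\nu,v\rangle=-\langle \gamma'',\nu\rangle$ and then check that the outcome is consistent with a sanity check: for two parallel planes both terms vanish, and for a convex $\Sigma$ (a sphere seen from outside with outward $\nu$ pointing away from $X$…) the distance should decrease. Since the statement is simply a formula in terms of $a$ and $b$, only this consistency with the convention fixed earlier is needed.

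For the bound on $Q_i$, we have $0\le Q_i\le |v_i-w_i|^2/R\le C^2\varepsilon^2R/R=C^2\varepsilon^2$ by \eqref{eq:KEY}. The factor $R$ cancels, so no lower bound on $R$ is needed. Finally, the Hessian of $F_\varepsilon$ is related through the chain rule, but that step belongs to what comes after this lemma.
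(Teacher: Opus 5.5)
Your proof is correct and follows the paper's argument. Like the paper, you differentiate $|\gamma(s)-\sigma(s)|$ twice along geodesics of $\Sigma$ and $B$, insert $\gamma''(0)=-h_\Sigma(v_i,v_i)\nu_x$ and $\sigma''(0)=-h_B(w_i,w_i)\nu_y$, use $\langle\nu_x,\uparrow_y^x\rangle=a$ and $\langle\nu_y,\uparrow_y^x\rangle=-b$, and bound $Q_i\le |v_i-w_i|^2/R\le C^2\varepsilon^2$ via \eqref{eq:KEY}. Do delete the unfinished display ending in $\cdot(-1)\cdots$, which read literally gives the wrong sign; the evaluation on the line after it is the correct one.
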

\begin{proof}
	Let $\alpha_{i}(s)\subset \Sigma$ and $\beta_{i}(s)\subset B$ be geodesics in $\Sigma$ and $B$ respectively such that
	\begin{equation*}
	\alpha_{i}(0)=x, \alpha_{i}'(0)=v_{i}; \quad \beta_{i}(0)=y, \beta_{i}'(0)=w_{i}.
	\end{equation*}
	Then
	\begin{equation}\label{eq:temp02}
	\alpha_{i}''(0)=-h_{\Sigma}(v_{i}, v_{i}) \nu_{\Sigma}(x);\quad \beta_{i}''(0)=-h_{B}(w_{i}, w_{i})\nu_{B}(y).
	\end{equation}
	Here $h_{\Sigma}$ is the second fundamental form of $\Sigma$ and $\nu$ is the outward normal of $\partial X$. Standard Euclidean calculation shows.
	\begin{equation}\label{eq:temp03}
		r''(0) = \frac{|v_{i}-w_{i}|^{2} - \left\langle v_{i}-w_{i}, \uparrow_{x}^{y} \right\rangle ^{2}}{r} 
		- \left\langle \uparrow_{x}^{y}, \alpha''(0)-\beta''(0) \right\rangle.
	\end{equation}
	Combine \eqref{eq:temp02} and \eqref{eq:temp03}, we have the desired Hessian form. The estimate of $Q$ follows from \eqref{eq:KEY}.
\end{proof}

Note that:
	\begin{equation*}
		\operatorname{Hess}_{\Sigma\times B} \sqrt{ r } = \frac{1}{2\sqrt r}\operatorname{Hess}r - \frac{1}{4r^{3/2}}dr\otimes dr.
	\end{equation*}
	Use lemma \ref{lem:Hessofr}, we have:
	\begin{equation}\label{eq:H01}
		\operatorname{Hess}_{\Sigma\times B} \sqrt{ r }(V_{i}, V_{i}) \le \frac{1}{2\sqrt{R}} (Q_{i} -a h_{\Sigma}(v_{i}, v_{i}) -b h_{B}(w_{i}, w_{i})).
	\end{equation}
	The Hessian of $\phi$ terms are:
	\begin{equation}\label{eq:H02}
		\operatorname{Hess}_{\Sigma}\phi(v_{i}, v_{i}) = \operatorname{Hess} \phi(v_{i}, v_{i}) - h_{\Sigma}(v_{i}, v_{i})
		\left\langle \nabla \phi (x), \nu_{\Sigma}(x) \right\rangle, 
	\end{equation}
	\begin{equation}\label{eq:H03}
		\operatorname{Hess}_{B}\phi(w_{i}, w_{i}) = \operatorname{Hess} \phi(w_{i}, w_{i}) - h_{B}(w_{i}, w_{i})
		\left\langle \nabla \phi (y), \nu_{B}(y) \right\rangle, 
	\end{equation}
Summing the equations \eqref{eq:H01}, \eqref{eq:H02} and \eqref{eq:H03}, we get:
	\begin{equation*}
		\begin{aligned}
			0 &\le \sum_{i=1}^{n-1}\operatorname{Hess}_{\Sigma\times B}F_{\varepsilon}(V_{i}, V_{i})  \\
			  &\le \frac{1}{2\sqrt{R}}\left( \sum_{i=1}^{n-1}Q_{i} -a H_{\Sigma}(x) -b H_{B}(y) \right)\\
			  &\quad + \varepsilon \left( \sum_{i=1}^{n-1} \operatorname{Hess} \phi(v_{i}, v_{i}) 
			  + \sum_{i=1}^{n-1} \operatorname{Hess} \phi(w_{i}, w_{i})\right)\\
			  &\quad -\varepsilon \left\langle \nabla \phi(x), \nu_{\Sigma}(x) \right\rangle H_{\Sigma}(x)
			    -\varepsilon \left\langle \nabla \phi(y), \nu_{B}(y) \right\rangle H_{B}(y)
		\end{aligned}
	\end{equation*}
Rearranging the terms:
\begin{equation}\label{eq:Main}
	\begin{aligned}
		&\left(a+2 \varepsilon \sqrt{R}  \left\langle \nabla \phi(x), \nu_{\Sigma}(x) \right\rangle \right ) H_{\Sigma}(x)
		+\left(b+2 \varepsilon \sqrt{R}  \left\langle \nabla \phi(y), \nu_{B}(y) \right\rangle \right ) H_{B}(y)\\
		&\le \sum_{i=1}^{n-1} Q_{i} +2 \varepsilon \sqrt{R}  \left( \sum_{i=1}^{n-1} \operatorname{Hess} \phi(v_{i}, v_{i}) 
			  + \sum_{i=1}^{n-1} \operatorname{Hess} \phi(w_{i}, w_{i})\right)& 
	\end{aligned}
\end{equation}

Since $\phi$ is $1$-Lipschitz and $R\le R_0$, the term $\sqrt{R} \left\langle \nabla\phi(x), \nu_{\Sigma}(x) \right\rangle$ is bounded. By lemma \ref{lem:NormalConvergence}, we know $a\to 1, b\to 1$ as $\varepsilon\to 0$. Hence for small $\varepsilon$, we have:
\begin{equation*}
	\left(a+2 \varepsilon \sqrt{R}  \left\langle \nabla \phi(x), \nu_{\Sigma}(x) \right\rangle \right )\ge 1/2,\quad  \text{as}\quad \varepsilon\to 0.
\end{equation*}
Similarly
\begin{equation*}
	\left(b+2 \varepsilon \sqrt{R}  \left\langle \nabla \phi(y), \nu_{B}(y) \right\rangle \right )\ge 1/2,\quad  \text{as}\quad \varepsilon\to 0.
\end{equation*}
Let's look at the other side of \eqref{eq:Main}. By \eqref{eq:KEY01}, we know $|Q_{i}|\le C \varepsilon^{2}$. The other two Hessian terms are clearly bounded since $|\operatorname{Hess}\phi |\le 1$. Therefore we have, for all $\varepsilon$ sufficiently small \eqref{eq:Main} can be rewritten as:
\begin{equation*}
	\frac{1}{2}(H_{\Sigma}(x)+H_{B}(y))\le \tilde{C} \varepsilon.
\end{equation*}
Since $X$ is assumed to be mean-convex, we know $H_{B}(y)\ge 0$. Thus $H_{\Sigma}(x)\le 2\tilde{C} \varepsilon$. If $H_{\Sigma}\ge c>0$, letting $\varepsilon\to 0$ yields a contradiction. This finishes the proof.

\bibliographystyle{alpha}
\bibliography{mybib}

\end{document}